\newcommand{\R}{\mathbb{R}}
\newcommand{\C}{\mathbb{C}}
\newcommand{\1}{\textbf{1}}
\newcommand{\dd}{\mathrm{d}}
\newcommand{\po}[2]{\frac{\textrm{d} #1}{\textrm{d} #2}}
\newcommand{\call}[4]{\int_{#1}^{#2} {#3} \; \textrm{d} {#4}}
\newcommand{\fun}[3]{#1\colon #2 \longrightarrow #3}
\newtheorem{proposition}{Proposition}
\newtheorem{thm}{Theorem}
\newtheorem{lemma}{Lemma}
\newtheorem{cor}{Corollary}
\theoremstyle{definition}
\newtheorem*{def*}{Definition}
\theoremstyle{remark}
\newtheorem*{rem*}{Remark}
\title{S-inequality for certain product measures}
\author{Piotr Nayar \thanks{Research partially supported by NCN Grant no. 2011/01/N/ST1/01839.}, Tomasz Tkocz \thanks{Research partially supported by NCN Grant no. 2011/01/N/ST1/05960.} }
\date{}
\begin{document}

\maketitle

\begin{abstract}
In this paper we prove the S-inequality for certain product probability measures and ideals in $\R^n$. As a result, for the Weibull and Gamma product distributions we derive concentration of measure type estimates as well as optimal comparison of moments.
\end{abstract}

\noindent {\bf 2010 Mathematics Subject Classification.} Primary 60G15; Secondary 60E15.

\noindent {\bf Key words and phrases.} S-inequality, Dilation, Exponential distribution, Weibull distribution, Gamma distribution, Concentration of measure, Comparison of moments.

\section{Introduction}\label{sec:intro}

The standard Gaussian measure $\gamma_n$ on $\R^n$ has been thoroughly studied in a context of dilations of convex and symmetric sets  (see \cite{CFM, LO1}). For example, it is known that for such a set $K$ in $\R^n$ we have the estimate
\[
	\gamma_n(tK) \geq \gamma_n(tP), \qquad t \geq 1,
\]
where the set $P = \{ x \in \R^n, \ |x_1| \leq p\}$ is a strip chosen so that $\gamma_n(P) = \gamma_n(K)$. This result is due to R. Lata\l a and K. Oleszkiewicz \cite{LO1} and it is called the S-inequality. 
A natural task is to find other examples of measures for which this type of bounds hold. 
It was conjectured in \cite[Conjecture 5.1]{Lat} that the S-inequality holds for rotation-invariant measures whose densities are nonincreasing on half lines through the origin. This has been verified in dimensions lower than or equal to 3 (see \cite{SZ}).

Besides the Gaussian measure, S-inequality is known to hold for the exponential measure. Loosely speaking, in \cite{NT} it has been obtained as a by-product of the proof that for the measure $\dd \nu^n(x) = (1/2^n)e^{-|x_1| - \ldots - |x_n|} \dd x$ in $\R^n$ among the sets which are unions of coordinate parallelepipeds, coordinate-wise symmetric (called \emph{ideals}, see Section \ref{sec:results} for a proper definition)
and which have a fixed measure, the strips have the minimal measure of dilation.
The aim of the present paper is to extend this result for the measures $\nu_p^n$ on $\R^n$ with densities
\begin{equation}\label{eq:defnu}
	\dd \nu_p^n(x) = (c_p/2)^ne^{-|x|_p^p} \dd x, \qquad x \in \R^n,
\end{equation}
where we denote $|(x_1, \ldots, x_n)|_p = (\sum |x_i|^p)^{1/p}$ and $c_p = 1/\Gamma(1+1/p)$ is a normalization constant.

Proving Lata\l a's conjecture or at least providing other examples of measures supporting S-inequality still seems to be a challenge. It is worth recalling that some work has been done in the complex case. One considers the Gaussian measure $\dd \mu_n(z) = \left[1/(2\pi)^n\right]e^{-|z_1|^2-\ldots-|z_n|^2} \prod_{j=1}^n\dd \textrm{Re}z_j\dd \textrm{Im}z_j$ on $\C^n$ and asks whether
\begin{equation}\label{eq:complexS}
\mu_n(K) = \mu_n(C) \quad \Longrightarrow \quad \mu_n(tK) \geq \mu_n(tP), t \geq 1,
\end{equation}
for all convex \emph{circled}  sets $K \subset \C^n$ and all cylinders $C = \{z \in \C^n, \ |z_1| \leq R\}$. ($K$ is \emph{circled} if $z \in K$ implies $e^{it}z \in K$ for any $z \in \C^n$ and real $t$.) This was conjectured by A. Pe\l czy\'nski and seems to be a natural complex counterpart of the Lata\l a-Oleszkiewicz S-inequality. Following the methods from \cite{LO1}, the author obtained a partial result which says that there exists a universal constant $c > 0.64$ such that \eqref{eq:complexS} holds for $t \in [1,t_0]$ with $\mu_n(t_0K) = c$ (see \cite{Tko}). Using the inductive argument from \cite{KS}, the authors showed \eqref{eq:complexS} for all $K$ which are \emph{Reinhardt complete}, i.e. they are \emph{circled} with respect to each coordinate individually --- along with each point $(z_1, \ldots, z_n)$ such a set contains all points $(w_1, \ldots, w_n)$ for which $|w_k| \leq |z_k|$, $k = 1, \ldots, n$ (this is in fact a complex notion of ideals). Surprisingly, that result implies the real case of the S-inequality for the exponential measures. See \cite{NT} for the details.

In Section \ref{sec:results} we present our main results. Section \ref{sec:proofs} is devoted to their proofs.

\section{Results}\label{sec:results}

We begin with a few definitions. For a Borel measure $\mu$ on $\R$ its product measure $\mu \otimes \ldots \otimes \mu = \mu^{\otimes n}$ is denoted by $\mu^n$. We say that such a product measure $\mu^n$ on $\R^n$ \emph{supports the S-inequality for a Borel set} $L \subset \R^n$ if for any its dilation $K = sL$, $s > 0$, and any strip $P = \{x \in \R^n, \ |x_1| \leq p\}$ we have
\begin{equation}\label{eq:sineqdef}
	\mu^n(K) = \mu^n(P) \quad \Longrightarrow \quad \mu^n(tK) \geq \mu^n(tP), \qquad \textrm{for $t \geq 1$}.
\end{equation}
If we assume that the function $\Psi(x) = \mu\left( [-x,x]\right)$ is invertible for $x \geq 0$, we can write (\ref{eq:sineqdef}) as
\begin{equation}\label{eq:sineqconc}
	\mu^n(tK) \geq \Psi\Big[ t\Psi^{-1}\big(\mu(K)\big)\Big], \qquad \textrm{for $t \geq 1$}.
\end{equation}
A set $K \subset \R^n$ is called an \emph{ideal} if along with any its point $x \in K$ it contains the cube $[-|x_1|,|x_1|]\times \ldots \times [-|x_n|, |x_n|]$. 

Now we are able to state the main result.
\begin{thm}\label{thm1}
Let $p \in (0,1]$. Then the measure $\nu_p^n$ defined in \eqref{eq:defnu} supports the S-inequality for all ideals in $\R^n$.
\end{thm}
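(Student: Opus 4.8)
The key structural observation is that ideals have a product-like slicing structure: if $K \subset \R^n$ is an ideal, then for each fixed value of the last coordinate $x_n = u$, the slice $K_u = \{(x_1,\dots,x_{n-1}) : (x_1,\dots,x_{n-1},u) \in K\}$ is itself an ideal in $\R^{n-1}$, and moreover these slices are nested: $K_u \subseteq K_{u'}$ whenever $|u| \geq |u'|$. The strip $P$ is the degenerate case where the slices are either all of $\R^{n-1}$ or empty, depending only on whether $|u| \le p$.

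**I would set up the induction as follows.** Write $\mu = \nu_p^1$ and let $\Psi(x) = \mu([-x,x])$. The inductive hypothesis is that $\nu_p^{n-1}$ supports the S-inequality for all ideals in $\R^{n-1}$; equivalently, via \eqref{eq:sineqconc}, that $\nu_p^{n-1}(tK') \geq \Psi[t\Psi^{-1}(\nu_p^{n-1}(K'))]$ for ideals $K'$ and $t \geq 1$. Using the slice decomposition and Fubini, I would write
\[
	\nu_p^n(tK) = \int_\R \nu_p^{n-1}\big((tK)_u\big)\, \dd\mu(u) = \int_\R \nu_p^{n-1}\big(t K_{u/t}\big)\, \dd\mu(u),
\]
where the second equality uses the dilation structure of the slices. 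The inductive hypothesis bounds each $\nu_p^{n-1}(tK_{u/t})$ from below in terms of the measure of the undilated slice. After this reduction, the remaining problem is genuinely one-dimensional: one must control the integral over $u$ of a one-dimensional dilation of the ``effective radius'' of the slices.

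\textbf{The main obstacle, and where the hypothesis $p \le 1$ must enter, is the one-dimensional step.} The crux is a convexity or log-concavity property of the profile $u \mapsto \Psi^{-1}(\nu_p^{n-1}(K_u))$ together with the shape of the density $e^{-|u|^p}$. For $p \le 1$ the density $e^{-|x|^p}$ is \emph{log-convex} on $(0,\infty)$ (since $x \mapsto x^p$ is concave there), which is precisely the feature that fails for $p > 1$ and explains the restriction in the theorem. I expect the heart of the argument to be a comparison lemma showing that among all admissible nested families of slices with a fixed total measure, the extremal configuration — minimizing the measure of the dilate — is the strip, i.e. the step profile. The natural route is to differentiate $\nu_p^n(tK)$ in $t$, reduce the desired inequality to a pointwise statement about the one-dimensional density and its dilation, and exploit log-convexity of $e^{-|u|^p}$ to push the extremal configuration toward the strip. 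Verifying this extremality rigorously, rather than the bookkeeping of the Fubini reduction, is where the real work lies.
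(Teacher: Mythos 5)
Your Fubini slicing step is sound and is essentially Proposition 1 of the paper, with one organizational point you are missing: after the induction hypothesis is applied, the lower bound $\int_\R \mu\left([-tw_{K_{u/t}},\, tw_{K_{u/t}}]\right) \dd\mu(u)$ (where $w_{K_v}$ denotes the half-width of the strip having the same $\nu_p^{n-1}$-measure as the slice $K_v$) is exactly $\nu_p^2(tG_f)$ for the two-dimensional ideal $G_f = \{(u,y):\ |y|\le f(u)\}$ generated by the profile $f(u) = w_{K_u}$, since $tG_f$ is generated by $u \mapsto tf(u/t)$. Hence the induction reduces every dimension to the case $n=2$ once and for all. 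The ``remaining problem'' you describe is therefore not one-dimensional in any useful sense --- the S-inequality in $\R^1$ is trivial --- it \emph{is} the two-dimensional case of the theorem, in disguise.

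And that is where the genuine gap lies: your final paragraph locates the difficulty but does not resolve it, and the mechanism you propose --- log-convexity of $e^{-|x|^p}$ for $p \le 1$ together with an unspecified argument ``pushing the extremal configuration toward the strip'' --- is a heuristic, not a proof. What the paper actually does for $n=2$ is the following chain. First, by a known reduction (Proposition 1 of \cite{NT}) it suffices to prove the derivative inequality $\po{}{t}\nu_p^2(tK)\Big|_{t=1} \ge \po{}{t}\nu_p^2(tP)\Big|_{t=1}$, which after differentiating under the integral amounts to $M_p(K) \le M_p(P)$, where $M_p(A) = \int_A (|x|^p+|y|^p)\,\dd\nu_p^2$. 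Second, introducing $T(u) = c_p\int_u^\infty e^{-x^p}\dd x$, $S(u) = c_p\int_0^u x^pe^{-x^p}\dd x$, $\Phi = S\circ T^{-1}$ and $g = T\circ f$, this becomes the functional inequality $\int \Phi(g)\,\dd\mu_+ - \Phi\left(\int g\,\dd\mu_+\right) \le \int g(x)\left(x^p-\tfrac1p\right)\dd\mu_+(x)$. Third, by the Lata\l a--Oleszkiewicz lemma the left-hand side is a \emph{convex} functional of $g$ provided $\Phi''>0$ and $1/\Phi''$ is concave, while the right-hand side is linear in $g$; so it suffices to verify the inequality at the extreme points $g_a = \1_{[a,\infty)}$, where a direct computation gives \emph{equality}. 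The hypothesis $p\le 1$ enters exactly once, in checking $(1/\Phi'')'' = -\frac{1-p}{c_p}F^{-p-1}e^{F^p}\le 0$ with $F = T^{-1}$: this is a condition on the function $S\circ T^{-1}$ built from \emph{integrals} of the density, and it is genuinely more delicate than log-convexity of the density itself, which is what your sketch appeals to. Without this chain --- the derivative criterion, the reformulation, the convexity lemma, and the equality check at indicator functions --- your proposal contains no proof of the strip's extremality, and that extremality is the entire content of the theorem.
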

Thanks to simple coordinate-wise transport of measure argument we obtain the following corollary.
\begin{cor}\label{cor:weibullgamma}
For $p \in (0,1]$ and $\alpha > 0$ introduce the measure $\mu_{p, \alpha}$ on $\R$ with density
\begin{equation}\label{eq:measuremu}
	\dd \mu_{p, \alpha}(x) = \alpha c_p |x|^{\alpha - 1}e^{-|x|^{\alpha p}} \dd x.
\end{equation}
Then the product measures $\mu_{p, \alpha}^n$ supports the S-inequality for all ideals in $\R^n$.
In particular, defining for $\alpha > 0$ and $q \geq 1$ on $\R$ the symmetric Weibull measure $\omega_\alpha$ with the parameter $\alpha$ and the symmetric Gamma measure $\lambda_q$ with the parameter $q$ by
\begin{align}
	\label{eq:defweibull}\dd \omega_\alpha(x) &= \frac{1}{2}\alpha|x|^{\alpha - 1}e^{-|x|^\alpha}\dd x, \\
	\label{eq:defgamma}\dd \lambda_q (x) &= \frac{1}{2\Gamma(q)}q|x|^{q-1}e^{-|x|}\dd x.
\end{align}
we obtain that the product measures $\omega_\alpha^n$ and $\lambda_q^n$ support the S-inequality for all ideals in $\R^n$.
\end{cor}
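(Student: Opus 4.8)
The plan is to deduce Corollary~\ref{cor:weibullgamma} from Theorem~\ref{thm1} by a coordinate-wise transport of measure. On the line I would consider the strictly increasing odd bijection $T(x)=\operatorname{sgn}(x)\,|x|^{1/\alpha}$, whose inverse is $T^{-1}(y)=\operatorname{sgn}(y)\,|y|^{\alpha}$. A one-line change of variables ($y=x^{1/\alpha}$ on the positive half-line, where $\dd\nu_p$ has density proportional to $e^{-x^p}$, so that $\dd x=\alpha y^{\alpha-1}\dd y$) turns the density of $\nu_p$ into one proportional to $|y|^{\alpha-1}e^{-|y|^{\alpha p}}$; thus $T$ pushes $\nu_p$ forward onto $\mu_{p,\alpha}$ (the overall normalising constant plays no role in the S-inequality). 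Taking the $n$-fold product map $\Phi=T\times\cdots\times T$ on $\R^n$ gives a bijection pushing $\nu_p^n$ onto $\mu_{p,\alpha}^n$, i.e. $\mu_{p,\alpha}^n(A)=\nu_p^n\big(\Phi^{-1}(A)\big)$ for every Borel set $A$.

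Next I would record the three structural properties of $\Phi^{-1}$ that make the transport compatible with the S-inequality. First, since each coordinate map $T^{-1}$ is odd and increasing, $\Phi^{-1}$ preserves the coordinate-wise order $|w_i|\le|x_i|$ and sends boxes to boxes, so $\Phi^{-1}$ maps ideals to ideals. Second, a strip $P=\{|x_1|\le p\}$ is mapped to $\Phi^{-1}(P)=\{|x_1|\le p^{\alpha}\}$, again a strip. Third --- and this is the only point that needs a little care --- dilations are intertwined with a change of parameter: for $t>0$ one has $\Phi^{-1}(tA)=t^{\alpha}\Phi^{-1}(A)$, because $T^{-1}(ty)=t^{\alpha}T^{-1}(y)$. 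Crucially, as $\alpha>0$ we have $t\ge 1\iff t^{\alpha}\ge 1$, so the regime $t\ge1$ is preserved.

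Now the conclusion is immediate. Let $L$ be an ideal, $K=sL$ a dilation of it and $P$ a strip with $\mu_{p,\alpha}^n(K)=\mu_{p,\alpha}^n(P)$. Put $L'=\Phi^{-1}(L)$, $K'=\Phi^{-1}(K)=s^{\alpha}L'$ and $P'=\Phi^{-1}(P)$. By the three properties above $L'$ is an ideal, $K'$ is a dilation of $L'$, and $P'$ is a strip, while $\nu_p^n(K')=\nu_p^n(P')$. Theorem~\ref{thm1} applied to $\nu_p^n$ then gives $\nu_p^n(t'K')\ge\nu_p^n(t'P')$ for all $t'\ge1$. For $t\ge1$ I compute $\mu_{p,\alpha}^n(tK)=\nu_p^n(t^{\alpha}K')$ and $\mu_{p,\alpha}^n(tP)=\nu_p^n(t^{\alpha}P')$, and choosing $t'=t^{\alpha}\ge1$ yields $\mu_{p,\alpha}^n(tK)\ge\mu_{p,\alpha}^n(tP)$. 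Hence $\mu_{p,\alpha}^n$ supports the S-inequality for every ideal.

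Finally I would read off the two named families. The Weibull law $\omega_\alpha$ is exactly $\mu_{1,\alpha}$ (take $p=1$, so that $c_1=1$ and the exponent $|x|^{\alpha p}$ becomes $|x|^{\alpha}$), which is admissible since $p=1\in(0,1]$. The symmetric Gamma law $\lambda_q$ is $\mu_{1/q,\,q}$ (take $\alpha=q$ and $p=1/q$, so that $|x|^{\alpha-1}=|x|^{q-1}$ and $|x|^{\alpha p}=|x|$); here the hypothesis $q\ge1$ is precisely what guarantees $p=1/q\in(0,1]$. In both cases the first part of the corollary applies. The argument is otherwise routine: essentially the only thing one must verify carefully is the dilation identity $\Phi^{-1}(tA)=t^{\alpha}\Phi^{-1}(A)$ together with the equivalence $t\ge1\iff t^{\alpha}\ge1$, which is exactly what allows Theorem~\ref{thm1} to transfer without loss.
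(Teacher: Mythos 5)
Your proposal is correct and is essentially the paper's own argument: a coordinate-wise power map transporting $\nu_p^n$ onto $\mu_{p,\alpha}^n$, with the homogeneity identity intertwining dilations (the paper writes $\widetilde{tK}=t^{1/\alpha}\widetilde{K}$ where your version reads $\Phi^{-1}(tA)=t^{\alpha}\Phi^{-1}(A)$), strips mapping to strips, and the same parameter identifications $\omega_\alpha=\mu_{1,\alpha}$, $\lambda_q=\mu_{1/q,q}$. The only cosmetic difference is that you pull ideals back from $\mu_{p,\alpha}^n$ to $\nu_p^n$, which neatly sidesteps the surjectivity of the ideal correspondence that the paper's push-forward phrasing leaves implicit.
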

The measures $\omega_\alpha^n$  provide the examples of distributions supporting the S-inequality and having both log-concave and log-convex tails. Indeed, the tail function of the Weibull distribution is $\omega_p\left( \{|x| > t\} \right) = e^{-t^\alpha}$ which is log-convex for $\alpha \in (0,1)$ and log-concave for $\alpha \in (1,\infty)$.

The fact that a measure supports the S-inequality for all ideals yields also the comparison of moments (see \cite[Corollary 2]{NT}). Here, the relevant result reads as follows.
\begin{cor}\label{cor:moments}
	Let $\|\cdot\|$ be a norm on $\R^n$ which is unconditional, i.e.
	\[
		\|(\epsilon_1 x_1, \ldots, \epsilon_n x_n)\| = \|(x_1, \ldots, x_n)\|
	\]
	for any $x_j \in \R$ and $\epsilon_j \in \{-1, 1\}$. Suppose that a product Borel probability measure $\mu^n = \mu^{\otimes n}$ supports the S-inequality for all ideals in $\R^n$. Then for $p \geq q > 0$
	\begin{equation}\label{eq.expmoments}
		\left( \int_{\R^n} \|x\|^p \dd \mu^n(x) \right)^{1/p} \leq C_{p,q} \left( \int_{\R^n} \|x\| ^q \dd \mu^n(x) \right)^{1/q},
	\end{equation}
	where the constant 
	\[ C_{p,q} = \frac{\left( \int_\R |x|^p \dd \mu(x) \right)^{1/p}}{\left( \int_\R |x|^q \dd \mu(x) \right)^{1/q}} 
	\] 
	is the best possible. In particular, we might take $\mu = \nu_p, \omega_\alpha, \lambda_q$, for $p \in (0, 1]$, $\alpha > 0$, $q \geq 1$ (see \eqref{eq:defnu}, \eqref{eq:defweibull}, \eqref{eq:defgamma}).
\end{cor}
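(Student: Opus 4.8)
The plan is to reduce the $n$-dimensional moment comparison to a one-dimensional single-crossing estimate supplied by the S-inequality. First I would observe that the unit ball $K = \{x \in \R^n : \|x\| \leq 1\}$ of an unconditional norm is an ideal: by unconditionality all $2^n$ coordinatewise sign-flips of a point $x \in K$ again lie in $K$, and their convex hull is exactly the box $[-|x_1|,|x_1|] \times \ldots \times [-|x_n|,|x_n|]$, which therefore lies in $K$ by convexity. Writing $X \sim \mu^n$ and $U = \|X\|$, the distribution function of $U$ is $t \mapsto \mu^n(tK)$, since $tK = \{\|x\| \leq t\}$, and every dilate $tK$ is again an ideal, so the hypothesis applies to $K$ and all its dilations.

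Next I would introduce the comparison variable. With $\Psi(x) = \mu([-x,x])$, choose $a = \Psi^{-1}(\mu^n(K))$, so that the strip $P_a = \{|x_1| \leq a\}$ satisfies $\mu^n(P_a) = \mu^n(K)$; let $Y \sim \mu$ and $V = |Y|/a$, whose distribution function is $t \mapsto \Psi(ta) = \mu^n(tP_a)$. Applying the S-inequality to $K$ at the common mass $\mu^n(K) = \mu^n(P_a)$ gives $\mu^n(tK) \geq \mu^n(tP_a)$ for $t \geq 1$. For $t \leq 1$ I would run the inequality backwards: apply it to the ideal $tK$ at dilation $1/t \geq 1$ against the equal-mass strip $P_b$, $\Psi(b) = \mu^n(tK)$, to get $\mu^n(K) \geq \Psi(b/t)$, hence $b \leq at$ and $\mu^n(tK) = \Psi(b) \leq \Psi(at) = \mu^n(tP_a)$. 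Thus the survival functions of $U$ and $V$ cross exactly once, at $t = 1$, with $\Pr(U > t) \leq \Pr(V > t)$ for $t \geq 1$.

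The analytic core is then a single-crossing moment lemma. Setting $g(t) = \Pr(U > t) - \Pr(V > t)$, we have $g \geq 0$ on $[0,1]$ and $g \leq 0$ on $[1,\infty)$, while $\E U^r - \E V^r = \int_0^\infty r t^{r-1} g(t)\,\dd t$. Since the desired inequality $\|U\|_p/\|U\|_q \leq \|V\|_p/\|V\|_q$ is invariant under rescaling $U$ and $V$ separately, I would normalize so that $\E U^q = \E V^q$, i.e. $\int_0^\infty q t^{q-1} g(t)\,\dd t = 0$ (rescaling only moves the crossing point, preserving the single sign change). Writing $p t^{p-1} = q t^{q-1} \cdot \tfrac{p}{q} t^{p-q}$ and subtracting the value of the weight $\tfrac{p}{q} t^{p-q}$ at the crossing point, the nondecreasing factor $t^{p-q}$ (here $p \geq q$) makes the resulting integrand nonpositive on both sides of the crossing, whence $\E U^p \leq \E V^p$. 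This is exactly \eqref{eq.expmoments} with $C_{p,q} = \|V\|_p/\|V\|_q = (\int_\R |x|^p \dd\mu)^{1/p}/(\int_\R |x|^q \dd\mu)^{1/q}$, the scale $a$ cancelling in the ratio; finiteness of all moments under $\nu_p, \omega_\alpha, \lambda_q$ makes these manipulations legitimate.

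Finally, for optimality I would take $\|x\| = |x_1| + \varepsilon \sum_{i \geq 2} |x_i|$ and let $\varepsilon \to 0^+$, so that $\|X\| \to |X_1|$ in each $L^r$ and $(\E\|X\|^p)^{1/p}/(\E\|X\|^q)^{1/q} \to (\E|X_1|^p)^{1/p}/(\E|X_1|^q)^{1/q} = C_{p,q}$, showing the constant cannot be lowered. The main obstacle is the $t \leq 1$ half of the single-crossing claim, since the S-inequality as stated only bounds dilations for $t \geq 1$; this is exactly what the backward application in the second paragraph resolves, after which the remaining steps are routine.
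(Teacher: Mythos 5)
Your overall strategy --- pass to the unit ball $K$ of the norm (an ideal, by your sign-flip/convexity argument), compare the law of $U=\|X\|$ with a rescaled one-dimensional variable via the S-inequality applied both forwards ($t\ge 1$) and backwards (to the ideal $tK$, for $t\le 1$), and finish with a single-crossing moment lemma --- is the right one; the paper itself offers no proof but delegates to \cite{NT}, and the argument there is of exactly this type. The forward and backward crossing steps and the final weight-comparison integral are correct.

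The gap is the parenthetical claim that ``rescaling only moves the crossing point, preserving the single sign change.'' That is false in general: single crossing of survival functions is not a scale-invariant property, whereas the moment-ratio conclusion is, so the one crossing you established (at the particular scale $a=\Psi^{-1}(\mu^n(K))$) cannot by itself yield the conclusion. Concretely, let $U$ take the values $1,100$ with probability $1/2$ each, and let $V$ take the values $\epsilon,1000$ with probabilities $\delta,1-\delta$. For small $\delta$ the difference $\Pr(U>t)-\Pr(V>t)$ is $\ge 0$ on $[0,1]$ and $\le 0$ on $[1,\infty)$, a genuine single crossing, yet $(\E U^2)^{1/2}/\E U\approx 1.4$ while $(\E V^2)^{1/2}/\E V\to 1$, so the moment-ratio inequality fails; consequently, for this pair the single sign change is necessarily destroyed when $V$ is rescaled to equalize first moments (otherwise your own final computation would prove the false inequality). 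So your proof needs crossing information at \emph{every} scale $c>0$, not just at one. This is exactly what the hypothesis provides, because by the paper's definition supporting the S-inequality for $K$ includes all dilations $sK$: writing $V=|Y|/a$, if $\mu^n(tK)\ge \Psi(at/c)$ for some $t$, then \eqref{eq:sineqconc} applied to the ideal $tK$ gives, for all $s\ge 1$, $\mu^n(stK)\ge \Psi\bigl(s\Psi^{-1}(\mu^n(tK))\bigr)\ge \Psi\bigl(a(st)/c\bigr)$, so the set $\{t>0:\ \Pr(U>t)\le \Pr(cV>t)\}$ is a half-line $[t_c,\infty)$. This is the single sign change for every $c$ (equivalently, $U$ is dominated by $|Y|$ in the star order), after which your normalization to equal $q$-th moments and the comparison of the weights $pt^{p-1}$ and $qt^{q-1}$ go through verbatim, and your optimality argument with $\|x\|=|x_1|+\varepsilon\sum_{i\ge 2}|x_i|$ is fine. (You also tacitly assume $\Psi$ is continuous and strictly increasing when defining $a$ and inverting $\Psi$; this is harmless for the measures $\nu_p,\omega_\alpha,\lambda_q$, which have positive densities, but should be flagged for a general Borel $\mu$.)
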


\section{Proofs}\label{sec:proofs}

\subsection{Proof of Theorem \ref{thm1}}

The theorem is trivial in one dimension. For higher dimensions the strategy of the proof is to reduce the problem to the two dimensional case where everything can be computed. This is done in the following proposition.
\begin{proposition} \label{prop1}
Let $\mu$ be a Borel probability measure on $\R$. Let $\mu^n = \mu^{\otimes n}$ be its product measure on $\R^n$. If $\mu^2$ supports S-inequality for all ideals on $\R^2$ then for any $n \geq 2$ the measure $\mu^n$ supports S-inequality for all ideals on $\R^n$.
\end{proposition}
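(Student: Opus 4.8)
The plan is to argue by induction on the dimension $n$, taking the assumed two-dimensional case as the base step. It is convenient to first record the equivalent form of the S-inequality behind \eqref{eq:sineqconc}: since the class of ideals is invariant under dilations, for any $m$ the measure $\mu^m$ supports the S-inequality for all ideals in $\R^m$ if and only if for every ideal $K \subset \R^m$ and every $t \geq 1$ we have $\mu^m(tK) \geq \Psi\big(t\,\Psi^{-1}(\mu^m(K))\big)$, where $\Psi(x) = \mu([-x,x])$ (note that a strip has the same measure in every dimension, so the same $\Psi$ appears for all $m$). Assuming the claim in dimension $n-1 \geq 2$, I would prove it in dimension $n$; the two-dimensional hypothesis will also be invoked directly inside the step.

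Fix an ideal $K \subset \R^n$ and slice it along the last coordinate: for $u \in \R$ put $K_u = \{ (x_1,\ldots,x_{n-1}) : (x_1,\ldots,x_{n-1},u) \in K \}$. First I would check the structural facts that make the reduction work: each section $K_u$ is an ideal in $\R^{n-1}$, the family is symmetric and nested (if $|u'| \le |u|$ then $K_u \subset K_{u'}$), and dilation commutes with slicing, $(tK)_u = t\,K_{u/t}$. Next I replace every section by the first-coordinate strip of equal measure, $S_u = \{|x_1| \le \rho(u)\}$ with $\rho(u) = \Psi^{-1}\big(\mu^{n-1}(K_u)\big)$, and set $K' = \{(x_1,\ldots,x_n) : |x_1| \le \rho(x_n)\}$. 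Because $\rho$ is symmetric and non-increasing in $|u|$, the set $K'$ depends only on $(x_1,x_n)$ and equals $\tilde K \times \R^{n-2}$ for the genuine two-dimensional ideal $\tilde K = \{(x_1,x_n) : |x_1| \le \rho(x_n)\} \subset \R^2$; moreover $\mu^n(tK') = \mu^2(t\tilde K)$ and, by Fubini, $\mu^n(K') = \mu^n(K)$.

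The decisive step is the pointwise-in-$u$ comparison $\mu^n(tK) \ge \mu^n(tK')$ for all $t \geq 1$. Writing both measures as integrals of their sections over the last coordinate and using $(tK)_u = t\,K_{u/t}$, the induction hypothesis applied to the ideal $K_{u/t} \subset \R^{n-1}$ gives $\mu^{n-1}\big(t\,K_{u/t}\big) \ge \Psi\big(t\,\rho(u/t)\big) = \mu^{n-1}\big(t\,S_{u/t}\big)$ for each fixed $u$, and integrating against $\dd\mu(u)$ yields $\mu^n(tK) \ge \mu^n(tK')$. Combining this with the measure identities and the two-dimensional hypothesis applied to $\tilde K$ completes the argument:
\[
	\mu^n(tK) \;\ge\; \mu^n(tK') \;=\; \mu^2(t\tilde K) \;\ge\; \Psi\big(t\,\Psi^{-1}(\mu^2(\tilde K))\big) \;=\; \Psi\big(t\,\Psi^{-1}(\mu^n(K))\big).
\]
The main obstacle is exactly this comparison $\mu^n(tK)\ge\mu^n(tK')$: it is where the slicing, the commutation of dilation with sections, and the $(n-1)$-dimensional hypothesis must be assembled correctly. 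The remaining care concerns the verification that $\tilde K$ is an ideal (monotonicity of $\rho$) and the existence of strips of matching measure, for which one may assume, as around \eqref{eq:sineqconc}, that $\Psi$ is continuous and invertible on the relevant range.
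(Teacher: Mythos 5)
Your proposal is correct and takes essentially the same route as the paper's own proof: slice the ideal along the last coordinate, use $(tK)_u = tK_{u/t}$ and the inductive hypothesis to replace each section by the strip of equal measure, observe that the resulting set is (a cylinder over) the two-dimensional ideal generated by the width function, and finish by applying the two-dimensional hypothesis to that ideal. Your $\tilde K$ is exactly the paper's ideal $G_f$ generated by $f(x) = w_{K_x}$, and your key comparison $\mu^n(tK) \ge \mu^n(tK')$ is precisely the paper's chain of inequalities, merely rewritten in the $\Psi^{-1}$ notation of \eqref{eq:sineqconc}.
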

\begin{proof}
We proceed by induction on $n$. Let us fix $n \geq 2$ and assume that $\mu^n$ supports S-inequality for all ideals in $\R^n$. We would like to show that $\mu^{n+1}$ supports S-inequality for all ideals in $\R^{n+1}$. To this end consider an ideal $K \subset \R^{n+1}$ and set $t \geq 1$. Thanks to Fubini's theorem
\[
	\mu^{n+1}(tK) = \int_\R \mu^n((tK)_x) \dd \mu (x) = \int_\R \mu^n(tK_{x/t}) \dd \mu (x),
\]
where $A_x = \{y \in \R^n, \ (y,x) \in A\}$ is a section of a set $A \subset \R^{n+1}$ at a level $x \in \R$. For a set $A$ let $P_A$ denote a strip with a width $w_A$ such that $\mu^n(A) = \mu^n(P_A)$. Since the section $K_{x/t}$ is an ideal in $\R^n$, by the induction hypothesis we obtain
\[
	\mu^{n+1}(tK) \geq \int_\R \mu^n\left(tP_{K_{x/t}}\right) \dd \mu(x) = \int_\R \mu\left([-tw_{K_{x/t}}, tw_{K_{x/t}}]\right) \dd \mu (x).
\]
For the simplicity denote the function $x \mapsto w_{K_x}$ by $f$. If we put $G_f \subset \R^2$ to be an ideal \emph{generated} by $f$, i.e. $G_f = \{(x,y) \in \R^2, \ |y| \leq f(x), x \in \R\}$, then its dilation $tG_f$ is generated by the function $x \mapsto tf(x/t)$. Therefore
\[
	\int_\R \mu\left([-tw_{K_{x/t}}, tw_{K_{x/t}}]\right) \dd \mu (x) = \mu^2(tG_f).
\]
Yet, $\mu^2(G_f) = \mu^{n+1}(K)$, so taking the strip $P = [-w,w] \times \R^n$ with the same measure as $K$ we see that the strip $[-w,w]\times \R$ has the same measure as $G_f$. Now the fact that $\mu^2$ supports S-inequality implies $\mu^2(tG_f) \geq \mu^2(t([-w,w]\times \R)) = \mu^{n+1}(tP)$. Thus we have shown that $\mu^{n+1}(tK) \geq \mu^{n+1}(tP)$, which completes the proof.
\end{proof}

Thus it suffices to show the theorem when $n = 2$. Notice that any ideal $K \subset \R^2$ can be described by a nonincreasing function $f:\R_+ \to \R_+$, namely
\[
	K = \left\{ (x,y) \in \R^2, \; |y| \leq f(|x|)   \right\}.
\]
Fix such a function and take a strip $P=\{ |x_1| \leq w \}$ such that $\nu^2_p(K)=\nu^2_p(P)$. To prove that $\nu^2_p$ supports the S-inequality for the ideal $K$ it is enough to show that (see \cite[Proposition 1]{NT}) 
\[
	\po{}{t} \nu^2_p(tK) \Big|_{t=1} \geq \po{}{t} \nu^2_p(tP) \Big|_{t=1}.
\]
Let
\[
	M_p(K) = \call{K}{}{(|x|^p + |y|^p)}{\nu^2_p(x,y)}.
\]
We have
\[
	\nu^2_p(tK) = \frac{c_p^2}{4} \call{tK}{}{e^{-(|x|^p+|y|^p)}}{x \dd y} = \frac{c_p^2}{4} \call{K}{}{t^2e^{-t^p(|x|^p+|y|^p)}}{x \dd y} ,
\]
hence
\[
	\po{}{t} \nu^2_p(tK) \Big|_{t=1} = 2 \nu^2_p(K) -   p M_p(K).
\]
Therefore we are to prove that $M_p(K)\leq M_p(P)$.
Define the functions $T:\R_+ \to [0,1]$, $S:\R_+ \to [0,1]$
\[
	T(u) = c_p \call{u}{\infty}{e^{-x^p}}{x}, \qquad S(u)= c_p \call{0}{u}{x^pe^{-x^p}}{x}
\]
and let $\mu_+$ be the probability measure with density $c_pe^{-x^p}$ on $\R_+$.
Note that
\[
	S(u) = c_p \frac{1}{p}\call{0}{u}{x (-e^{-x^p})'}{x} = -\frac{c_p}{p} u e^{-u^p} + \frac{1}{p}(1-T(u)).   
\]
Thus $S(\infty) = 1/p$. We have
\begin{align*}
	M_p(K) & =  c_p^2 \call{0}{\infty}{ \call{0}{f(x)}{  (x^p+y^p) e^{-x^p-y^p}}{y}}{x} \\
	& = c_p \call{0}{\infty}{x^p e^{-x^p}(1-T(f(x)))}{x} 
	 +c_p \call{0}{\infty}{S(f(x)) e^{-x^p}}{x} \\
	& = \frac{1}{p} -  \call{0}{\infty}{x^p T(f(x))}{\mu_+(x)} 
	 + \call{0}{\infty}{S(f(x)) }{\mu_+(x)}.  
\end{align*}
To compute $M_p(P)$, it is enough to take $f(x) = \infty$ for $x < w$ and $f(x) = 0$ for $x \geq w$ in the above computations, we obtain
\begin{align*}
	\call{P}{}{(|x|^p + |y|^p)}{\nu^2_p(x,y)} & = \frac{1}{p} - \left(\frac{1}{p}-S(w) \right) + \frac{1}{p} \left( 1-T(w) \right) \\
	& = \frac{1}{p} +S(w) - \frac{1}{p} T(w).
\end{align*}
Let $\Phi:[0,1] \to \R$, $\Phi = S \circ T^{-1}$ and $g:\R_+ \to [0,1]$, $g=T \circ f$. We would like to prove
\[
	\call{}{}{\Phi(g)}{\mu_+} - \call{0}{\infty}{x^p g(x)}{\mu_+(x)} \leq S(w) - \frac{1}{p} T(w).
\]
Observe that 
\begin{align*}
	\nu^2_p(K) &= c_p^2 \call{0}{\infty}{ \call{0}{f(x)}{e^{-y^p-x^p}}{y}  }{x} \\
	&= \call{0}{\infty}{\left( 1-T(f(x)) \right)}{\mu_+(x)} = 1- \call{}{}{g}{\mu_+}.
\end{align*}
Our assumption $\nu^2_p(K)=\nu^2_p(P)$ yields $\call{}{}{g}{\mu_+} = T(w)$.
Moreover,
\[
	S(w) = \Phi(T(w)) = \Phi \left( \call{}{}{g}{\mu_+} \right).
\]
Therefore our inequality can be expressed in the following form
\[
	\call{}{}{\Phi(g)}{\mu_+} -  \Phi \left( \call{}{}{g}{\mu_+} \right) \leq \call{0}{\infty}{ g(x) \left( x^p - \frac{1}{p} \right) }{\mu_+(x)} .
\]
Note that $g:\R_+ \to [0,1]$ is nondecreasing. Summing up, to establish Theorem \ref{thm1} it suffices to prove the following lemma.
\begin{lemma} \label{lem1}
Let $p\in (0,1]$ and let $\mu_+$ be a measure with density $c_pe^{-x^p}$ supported on $\R_+$. Then for all nondecreasing functions $g:\R_+ \to [0,1]$ we have
\begin{equation}\label{eq2}
	\call{}{}{\Phi(g)}{\mu_+} -  \Phi \left( \call{}{}{g}{\mu_+} \right) \leq \call{0}{\infty}{ g(x) \left( x^p - \frac{1}{p} \right) }{\mu_+(x)} .
\end{equation}
\end{lemma}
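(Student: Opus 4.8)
The plan is to first record the analytic properties of $\Phi$, then pass to a normalized form on $[0,1]$, reduce the problem to a two-point inequality by a convexity/extreme-point argument, and finally establish that two-point inequality, which is the real core.

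First I would compute the derivative of $\Phi=S\circ T^{-1}$. Since $T'(u)=-c_pe^{-u^p}$ and $S'(u)=c_pu^pe^{-u^p}$, one gets $\Phi'(s)=-\bigl(T^{-1}(s)\bigr)^p$; as $T^{-1}$ is decreasing this shows $\Phi'$ is increasing, i.e. $\Phi$ is convex on $[0,1]$, with boundary values $\Phi(0)=S(\infty)=1/p$ and $\Phi(1)=S(0)=0$. The identity $\Phi'(T(x))=-x^p$ is the point of contact between the two sides of \eqref{eq2} and is used repeatedly. I would then substitute $s=T(x)$: this pushes $\mu_+$ forward to Lebesgue measure on $[0,1]$, turns the nondecreasing $g$ into a nonincreasing $h(s)=g(T^{-1}(s))$ with values in $[0,1]$, and, using $x^p=-\Phi'(s)$, rewrites \eqref{eq2} in the symmetric form
\[
\int_0^1\Phi(h)\,\dd s-\Phi\Bigl(\int_0^1 h\,\dd s\Bigr)\le-\int_0^1 h(s)\Phi'(s)\,\dd s-\frac1p\int_0^1 h\,\dd s .
\]

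Write $m=\int_0^1 h\,\dd s$ and fix its value. As a functional of $h$, the left-hand side minus the right-hand side is the sum of the convex term $\int_0^1\Phi(h)\,\dd s$ and terms that are linear in $h$ (the map $h\mapsto\int_0^1 h\Phi'\,\dd s$) or constant on the slice $\{\int_0^1 h=m\}$. Hence it is convex in $h$, so its maximum over the convex set of nonincreasing $[0,1]$-valued functions with mean $m$ is attained at an extreme point, and the extreme points of this set are the three-level step functions $h=\1_{[0,a)}+\theta\1_{[a,b)}$ with $0\le a\le b\le1$ and $\theta\in[0,1]$ (a single fractional value, the rest pinned at $0$ or $1$). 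It therefore suffices to treat such $h$; a direct computation using $\int_0^a\Phi'=\Phi(a)-1/p$ collapses the inequality, after cancelling the $1/p$'s, to the two-point inequality
\[
(1-\theta)\Phi(a)+\theta\Phi(b)-\Phi\bigl((1-\theta)a+\theta b\bigr)\le(b-a)\Bigl[\tfrac{1-\theta}p-\Phi(\theta)\Bigr],
\]
for all $0\le a\le b\le1$ and $\theta\in[0,1]$. The indicator profiles ($\theta\in\{0,1\}$), the degenerate case $a=b$, and the whole edge $a=0,\,b=1$ all give equality here, so this inequality is sharp on a large set and convexity of $\Phi$ alone cannot yield it.

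The main obstacle is exactly this two-point inequality. I would attack it through the second-order (integral-remainder) representation of the chord gap: for an interval $[\alpha,\beta]$ and the point $(1-\theta)\alpha+\theta\beta$ dividing it in ratio $\theta$,
\[
(1-\theta)\Phi(\alpha)+\theta\Phi(\beta)-\Phi\bigl((1-\theta)\alpha+\theta\beta\bigr)=\int_\alpha^\beta\Phi''(y)\,K_{\alpha,\beta,\theta}(y)\,\dd y,
\]
where $K_{\alpha,\beta,\theta}\ge0$ is the triangular (\emph{tent}) kernel peaking at $(1-\theta)\alpha+\theta\beta$. Since $\tfrac{1-\theta}p-\Phi(\theta)$ is precisely the corresponding gap on $[0,1]$, the inequality says that the normalized gap $D_\theta(a,b):=(b-a)^{-1}\bigl[(1-\theta)\Phi(a)+\theta\Phi(b)-\Phi((1-\theta)a+\theta b)\bigr]$ is maximized over subintervals $[a,b]\subseteq[0,1]$ at $[a,b]=[0,1]$. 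After the affine rescaling $y=a+(b-a)\tau$ this becomes $D_\theta(a,b)=(b-a)\int_0^1\Phi''\bigl(a+(b-a)\tau\bigr)K_{0,1,\theta}(\tau)\,\dd\tau$, so the claim is that $D_\theta$ does not decrease when the interval is enlarged. This reduces to a definite sign condition on $\Phi''$ and its derivative, equivalently to the monotonicity of an explicit elementary expression built from $\Phi''(y)=\tfrac{p}{c_p}\bigl(T^{-1}(y)\bigr)^{p-1}e^{(T^{-1}(y))^p}$. I expect this to be where the hypothesis $p\in(0,1]$ enters essentially, and verifying that sign condition is the crux of the whole argument.
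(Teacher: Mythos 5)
Your reduction is correct, and in places quite elegant: the change of variables $s=T(x)$ pushing $\mu_+$ to Lebesgue measure, the trick of fixing $m=\int_0^1 h\,\dd s$ so that the nonconvex term $\Phi\bigl(\int h\bigr)$ becomes constant and only plain convexity of $\Phi$ is needed, and the identification of the extreme points of the slice as the three-level functions $h=\1_{[0,a)}+\theta\1_{[a,b)}$ all check out (the last step does need Bauer's maximum principle plus a Dubins-type description of extreme points under one linear constraint, together with compactness of the class of monotone functions, but these are standard). The computation collapsing the inequality to your two-point inequality is also correct. The problem is that the proof stops exactly where the content of the lemma begins: the two-point inequality is never proven, and you say so yourself (``the main obstacle'', ``the crux of the whole argument''). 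Note what that inequality really is: writing $L(h)=-\int_0^1 h\Phi'\,\dd s-\frac1p\int_0^1 h\,\dd s$ for the linear right-hand side and using the equality $\Psi_\Phi(\1_{[0,c)})=L(\1_{[0,c)})$ at indicators, your two-point inequality is precisely
\[
\Psi_\Phi\bigl((1-\theta)\1_{[0,a)}+\theta\1_{[0,b)}\bigr)\le(1-\theta)\Psi_\Phi\bigl(\1_{[0,a)}\bigr)+\theta\Psi_\Phi\bigl(\1_{[0,b)}\bigr),
\]
i.e.\ convexity of the deficit functional $\Psi_\Phi$ along segments between indicators. That is exactly the special case of the Lata\l a--Oleszkiewicz lemma (Lemma \ref{lem:LO}) that the paper invokes, made applicable by verifying $\Phi''>0$ and $(1/\Phi'')''\le0$ in Lemma \ref{lem2} --- the one place where $p\in(0,1]$ enters. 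So your write-up defers the entire analytic core of the lemma to an unproven claim.

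It is also doubtful that your proposed route closes this gap as stated. The assertion that $D_\theta(a,b)=(b-a)\int_0^1\Phi''\bigl(a+(b-a)\tau\bigr)K_{0,1,\theta}(\tau)\,\dd\tau$ ``does not decrease when the interval is enlarged'' is a pointwise monotonicity claim whose $b$-derivative mixes $\int_0^1\Phi''K\,\dd\tau$ with $(b-a)\int_0^1\Phi'''\,\tau K\,\dd\tau$, and there is no evident reason this combination has a definite sign; indeed for $p=1$ one has equality in the two-point inequality along the whole face $a=0$, so any such monotonicity is degenerate there and cannot follow from a strict sign condition. The natural sufficient condition is not a second/third-derivative sign condition at all but concavity of $1/\Phi''$ (a fourth-order condition on $\Phi$), which is what the paper checks and feeds into Lemma \ref{lem:LO}. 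Until you either prove the two-point inequality directly or import the Lata\l a--Oleszkiewicz convexity lemma as the paper does, the proof is incomplete.
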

In order to prove Lemma \ref{lem1} we shall need a lemma due to R. Lata\l a and K. Oleszkiewicz (see \cite[Lemma 4]{LO2} or \cite[Theorem 1]{Wol}). For convenience let us recall this result. 
\begin{lemma}[Lata\l a--Oleszkiewicz]\label{lem:LO}
Let $(\Omega,\nu)$ be a probability space and suppose that $\Phi:[0,1] \to \R$ has strictly positive second derivative and $1/\Phi''$ is concave. For a nonnegative function $g:\Omega \to [0,1]$ define a functional
\begin{equation} \label{eq3}
	\Psi_\Phi(g) = \call{\Omega}{}{\Phi(g)}{\nu} - \Phi  \left(  \call{\Omega}{}{g}{\nu} \right).
\end{equation}
Then $\Psi_\Phi$ is convex, namely
\[
	\Psi_\Phi(\lambda f + (1-\lambda)g) \leq  \lambda\Psi_\Phi( f) + (1-\lambda) \Psi_\Phi( g) .
\]
\end{lemma}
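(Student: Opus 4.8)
The plan is to prove convexity of the functional by restricting it to an arbitrary line segment and checking that the restriction has nonnegative second derivative. Fix two functions $f_0, f_1 : \Omega \to [0,1]$, set $f_\lambda = (1-\lambda)f_0 + \lambda f_1$ and $\delta = f_1 - f_0$, so that $f_\lambda$ takes values in $[0,1]$ for $\lambda \in [0,1]$. Writing $\phi(\lambda) = \Psi_\Phi(f_\lambda)$ and $m(\lambda) = \int_\Omega f_\lambda \, \dd\nu$, I would differentiate twice under the integral sign to obtain
\[
	\phi''(\lambda) = \int_\Omega \Phi''(f_\lambda)\,\delta^2 \, \dd\nu - \Phi''\big(m(\lambda)\big)\Big(\int_\Omega \delta \, \dd\nu\Big)^2,
\]
using that $m$ is affine in $\lambda$, hence $m''(\lambda)=0$. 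Convexity of $\Psi_\Phi$ then follows once we show $\phi''(\lambda) \geq 0$ along every such segment, since a functional on a convex set is convex precisely when its restriction to each segment is.

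The heart of the argument is the inequality
\[
	\Phi''\big(m(\lambda)\big)\Big(\int_\Omega \delta \, \dd\nu\Big)^2 \leq \int_\Omega \Phi''(f_\lambda)\,\delta^2 \, \dd\nu,
\]
which I would prove by combining two classical facts. First, Cauchy--Schwarz applied to the factorization $\delta = \big(\sqrt{\Phi''(f_\lambda)}\,\delta\big)\cdot\big(1/\sqrt{\Phi''(f_\lambda)}\big)$ gives
\[
	\Big(\int_\Omega \delta \, \dd\nu\Big)^2 \leq \int_\Omega \Phi''(f_\lambda)\,\delta^2 \, \dd\nu \cdot \int_\Omega \frac{1}{\Phi''(f_\lambda)}\, \dd\nu.
\]
Second, since $1/\Phi''$ is concave and $\nu$ is a probability measure, Jensen's inequality yields $\int_\Omega \big(1/\Phi''(f_\lambda)\big)\,\dd\nu \leq 1/\Phi''\big(m(\lambda)\big)$. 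Multiplying these together and rearranging produces exactly $\phi''(\lambda)\geq 0$. Here the strict positivity of $\Phi''$ is what makes the factorization in Cauchy--Schwarz legitimate and the ratio $1/\Phi''$ well defined, so both hypotheses on $\Phi$ enter precisely once.

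The main obstacle is not the inequality, which is short, but the analytic bookkeeping that makes the second-derivative computation rigorous. One must justify differentiating $\int_\Omega \Phi(f_\lambda)\,\dd\nu$ twice under the integral sign; since $f_\lambda$ is valued in the compact interval $[0,1]$ and $1/\Phi''$, being concave, is continuous on $(0,1)$, the functions $\Phi,\Phi',\Phi''$ are continuous there and $\nu$ is finite, so dominated convergence applies on segments whose values stay away from the endpoints. The case where $\Phi''$ is unbounded near $0$ or $1$ (permitted when $1/\Phi''$ vanishes at an endpoint) needs slightly more care: there the integral defining $\phi''(\lambda)$ may equal $+\infty$, but the Cauchy--Schwarz--Jensen chain remains valid with values in $[0,\infty]$, so the bound $\phi''(\lambda)\geq 0$ persists and the conclusion is unaffected. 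Establishing $\phi''\geq 0$ on every segment then completes the proof.
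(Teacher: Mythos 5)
Your proof is correct. Note that the paper itself gives no proof of this lemma --- it is quoted verbatim from the literature (Lemma 4 of Lata{\l}a--Oleszkiewicz, \emph{Between Sobolev and Poincar\'e}, also Theorem 1 in Wolff's paper), and your argument (second derivative along segments, then Cauchy--Schwarz with weight $\Phi''$ combined with Jensen applied to the concave function $1/\Phi''$) is essentially the standard proof appearing in those references, so you have reconstructed the cited proof rather than found a new route. The one point deserving slightly more care than your closing paragraph gives it is the regularity bookkeeping when $\Phi''$ blows up at the endpoints (which does happen for the $\Phi = S \circ T^{-1}$ of this paper): the cleanest fix is to first prove the convexity inequality for functions with values in $[\epsilon, 1-\epsilon]$, where differentiation under the integral sign is unproblematic, and then let $\epsilon \to 0$ using continuity of $\Phi$ on $[0,1]$ and dominated convergence, rather than assigning the value $+\infty$ to a second derivative whose existence is then itself in question.
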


Now we show that our function $\Phi=S \circ T^{-1}$ satisfies the assumptions of Lemma \ref{lem:LO}.

\begin{lemma} \label{lem2}
The function $\Phi=S \circ T^{-1}:[0,1] \to \R$ satisfies $\Phi''>0$ and $(1/ \Phi'')'' \leq 0$.
\end{lemma}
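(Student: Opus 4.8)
The plan is to carry out a direct computation, but to keep it tractable I would not differentiate $\Phi$ in the variable $s \in [0,1]$ directly; instead I would use $u = T^{-1}(s)$ as the working variable, since both $S$ and $T$ have simple derivatives in $u$. Concretely, $T'(u) = -c_p e^{-u^p}$ and $S'(u) = c_p u^p e^{-u^p}$, so by the chain rule $\Phi'(s) = S'(u)/T'(u) = -u^p$. This clean expression is the backbone of the whole argument, and throughout I will repeatedly use that $\frac{du}{ds} = 1/T'(u) = -e^{u^p}/c_p < 0$.

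First I would obtain $\Phi''$. Differentiating $\Phi'(s) = -u^p$ in $s$ gives $\Phi''(s) = -p\,u^{p-1}\cdot \frac{du}{ds} = \frac{p}{c_p}\,u^{p-1}e^{u^p}$, which is strictly positive for $u > 0$, i.e.\ for $s \in (0,1)$. This settles the first assertion $\Phi'' > 0$.

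Next I would treat the concavity of $1/\Phi''$. From the formula above, $1/\Phi''(s) = \frac{c_p}{p}\,u^{1-p}e^{-u^p}$. Differentiating once in $s$, the factor $e^{-u^p}$ from the product rule cancels against the $e^{u^p}$ coming from $\frac{du}{ds}$, and one is left with the remarkably simple $(1/\Phi'')'(s) = 1 - \frac{1-p}{p}\,u^{-p}$. Differentiating a second time then yields $(1/\Phi'')''(s) = (1-p)\,u^{-p-1}\cdot \frac{du}{ds} = -\frac{1-p}{c_p}\,u^{-p-1}e^{u^p}$.

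The final sign is where the hypothesis enters: since $p \in (0,1]$ we have $1-p \geq 0$, while every remaining factor $u^{-p-1}$, $e^{u^p}$ and $c_p$ is positive, so $(1/\Phi'')'' \leq 0$, as claimed. I do not expect a genuine difficulty here; the only real care needed is bookkeeping — tracking the negative chain-rule factor $\frac{du}{ds}$ and the various powers of $u$ — together with noting where the restriction $p \leq 1$ is used. Indeed it is used exactly once, to guarantee $1-p \geq 0$, which is precisely the point at which one sees why the theorem is confined to $p \leq 1$.
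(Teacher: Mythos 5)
Your proposal is correct and is essentially identical to the paper's own proof: both compute $\Phi' = -u^p$ via the chain rule (the paper writes $F = T^{-1}$ where you write $u$), obtain $\Phi'' = \frac{p}{c_p}u^{p-1}e^{u^p} > 0$, simplify $(1/\Phi'')'$ to $1 - \frac{1-p}{p}u^{-p}$, and conclude $(1/\Phi'')'' = -\frac{1-p}{c_p}u^{-p-1}e^{u^p} \leq 0$ using $p \leq 1$. You also correctly identify, as the paper's remark does, that the hypothesis $p \in (0,1]$ enters only in the final sign.
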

\begin{proof}
Let $T^{-1}=F$. Note that $F'=\frac{1}{T'(F)}=-\frac{1}{c_p}e^{F^p}$. We have
\[
	\Phi' = S'(F) F' = c_p F^p e^{-F^p} \left( -\frac{1}{c_p}e^{F^p} \right) = - F^p
\]
and
\[
	\Phi'' = -p F^{p-1} F' = \frac{p}{c_p} F^{p-1} e^{F^p} > 0.
\]
Moreover,
\begin{align*}
	(1 /\Phi'')' &= \frac{c_p}{p} \left( F^{1-p} e^{-F^p} \right)' \\
	&=  \frac{c_p}{p} \left( (1-p) F^{-p} -p F^{1-p} F^{p-1} \right) e^{-F^p} F' = 1 - \frac{1-p}{p} F^{-p} 
\end{align*}
and
\[
	(1 /\Phi'')'' = (1-p) F^{-p-1}F' = - \frac{1-p}{c_p} F^{-p-1}  e^{F^p} \leq 0.
\]
\end{proof}

\begin{rem*}\label{rem:generaltransports}
The reader might want to notice that the last inequality is the place where the proof of the theorem does not work for other values of $p$.
\end{rem*}

We are ready to give the proof of Lemma \ref{lem1}.
\begin{proof}[Proof of Lemma \ref{lem1}]
 Combining Lemmas \ref{lem:LO} and \ref{lem2} we see that the left hand side of (\ref{eq2}) is a convex functional of $g$. The right hand side is linear in $g$ and therefore we see that $\lambda g_1 + (1-\lambda) g_2$ satisfies (\ref{eq2}) for every $\lambda \in [0,1]$ whenever $g_1,g_2$ satisfy (\ref{eq2}). Due to an approximation argument it suffices to prove our inequality for nondecreasing right-continuous piecewise constant functions having finite number of values. Every such a function is a convex combination of a finite collection of functions of the form $g_a(x)=\1_{[a,\infty)}(x)$, where $a \in [0,\infty]$. Therefore it suffices to check (\ref{eq2}) for the functions $g_a$. Since $\Phi(0)=S(\infty)=1/p$ and $\Phi(1)=0$ we have 
\[
 	\call{}{}{\Phi(g_a)}{\mu_+} -  \Phi \left( \call{}{}{g_a}{\mu_+} \right) = \frac1p \left( 1-T(a) \right) - S(a)
\] 
and
\[
	\call{0}{\infty}{ g_a(x) \left( x^p - \frac{1}{p} \right) }{\mu_+(x)} = \frac1p - S(a) - \frac1p T(a),
\]
thus we have equality in (\ref{eq2}).
\end{proof}
The proof of Theorem \ref{thm1} is now complete.

\subsection{Proof of Corollary \ref{cor:weibullgamma}}

The idea behind Corollary \ref{cor:weibullgamma} is that once a measure supports the S-inequality for all ideals then so does its image under properly chosen transformation (cf. the proof of Theorem 2 in \cite{NT}). Fix $p \in (0,1]$ and $\alpha > 0$. Consider the mapping $\fun{F}{(\R_+)^n}{(\R_+)^n}$ given by the formula
\[
	F(x_1, \ldots, x_n) = (x_1^\alpha, \ldots, x_n^\alpha).
\]
We will use it to change the variables. So, take an ideal $K \subset \R^n$, the strip $P \subset \R^n$ such that $\nu_p^n(K) = \nu_p^n(P)$, and compute the measure of the dilation $tK$ for some $t \geq 1$
\begin{align*}
	\nu_p^n(tK) &= \left( \frac{c_p}{2} \right)^n\int_{tK} e^{-|x|^p_p} \dd x = c_p^n\int_{tK \cap (\R_+)^n} e^{-\sum x_i^p} \dd x \\
	&= (\alpha c_p)^n\int_{F^{-1}(tK \cap (\R_+)^n)} \prod y_i^{\alpha - 1}e^{-y_i^{\alpha p}} \dd y.
\end{align*}
In the first equality we have used the symmetries of ideals, while in the last one we have changed the variables putting $x = F(y)$. Introducing the measure $\mu_{p, \alpha}$ on $\R$ with density \eqref{eq:measuremu}
we thus have seen that
\[
	\nu_p^n(tK) = \mu_{p, \alpha}(\widetilde{tK}),
\]
where for an ideal $A$ in $\R^n$ the set $\widetilde{A}$ denotes an ideal such that $\widetilde{A} \cap (\R_+)^n = F^{-1}(A \cap (\R_+)^n)$ (note that it makes sense as $F$ is monotone with respect to each coordinate). The point is that due to the homogeneity of $F$ we have $\widetilde{tK} = t^{1/\alpha}\widetilde{K}$. 
Moreover, strips are mapped onto strips. Therefore
\[
	\mu_{p, \alpha}(t^{1/\alpha} \widetilde{K}) = \nu_p^n(tK) \geq \nu_p^n(tP) = \mu_{p, \alpha}(t^{1/\alpha} \widetilde{P}),
\]
which means that $\mu_{p, \alpha}$ supports the S-inequality for the ideal $\widetilde{K}$. Since the ideal $K$ is arbitrary, we conclude that $\mu_{p, \alpha}$ supports the S-inequality for all ideals. To finish the proof notice that we recover Weibull and Gamma distribution setting respectively $p=1$, $\alpha = 1/p = q$, i.e. $\omega_\alpha = \mu_{1, \alpha}$, $\lambda_q = \mu_{1/q, q}$.

\begin{rem*}\label{rem:generaltransports}
We might use more general change of variables $y_i = V(x_i)$ for some increasing function $\fun{V}{\R_+}{\R_+}$, $V(0) = 0$ and ask whether we will derive the S-inequality for other measures than $\mu_{p, \alpha}$ exploiting the above technique. Since we would like to have $\widetilde{tK} = u(t)\widetilde{K}$ for a monotone function $u$, we check it would imply that $V(st) = CV(s)V(t)$, and $C$ is a constant. So $V$ should be a power function yet this case has been studied in the above proof. 
\end{rem*}

\section*{Acknowledgements}

We would like to thank Prof. Krzysztof Oleszkiewicz for his useful comments.

\noindent Piotr Nayar \\
\noindent Institute of Mathematics, University of Warsaw, \\
\noindent Banacha 2, \\
\noindent 02-097 Warszawa, Poland. \\
\noindent \texttt{nayar@mimuw.edu.pl}

\vspace{1em}

\noindent Tomasz Tkocz \\
\noindent Institute of Mathematics, University of Warsaw, \\
\noindent Banacha 2, \\
\noindent 02-097 Warszawa, Poland. \\
\noindent \texttt{tkocz@mimuw.edu.pl}

\end{document}